\newtheorem{prop}{Proposition}
\newtheorem{theorem}[prop]{Theorem}
\newtheorem{cor}[prop]{Corollary}
\theoremstyle{definition}
\newtheorem*{mydef}{Definition}
\DeclareMathOperator{\Spec}{Spec}
\newcommand{\U}{\mathcal{U}}
\setlist[enumerate,1]{label={\upshape(\arabic*)}}
\begin{document}
\title{Closed points on schemes}
\author{Justin Chen}
\address{Department of Mathematics, University of California, Berkeley,
California, 94720 U.S.A}
\email{jchen@math.berkeley.edu}

\subjclass[2010]{{14A25, 54F65}}

\begin{abstract}
This brief note gives a survey on results relating to existence of closed points on 
schemes, including an elementary topological characterization of the schemes with 
(at least one) closed point. 
% In the quasicompact case, the schemes with exactly one 
% closed point are also characterized as spectra of local rings.
\end{abstract}

\vspace*{-1.3cm}
\maketitle

Let $X$ be a topological space. For a subset $S \subseteq X$, let $ \overline{S} = \overline{S}^X$ 
denote the closure of $S$ in $X$. Recall that a topological space is \textit{sober} if every 
irreducible closed subset has a unique generic point. The following is well-known:

\begin{prop} \label{noetherianSoberSpace}
Let $X$ be a Noetherian sober topological space, and $x \in X$. Then $\overline{\{x\}}$ contains a 
closed point of $X$. 
\end{prop}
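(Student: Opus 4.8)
The plan is to exhibit the desired closed point as a minimal nonempty closed subset of $\overline{\{x\}}$: the Noetherian hypothesis will guarantee that such a minimal set exists, and soberness will force it to consist of a single point, which is then automatically closed.

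First I would consider the family $\mathcal{F}$ of all nonempty closed subsets of $X$ contained in $\overline{\{x\}}$. This family is nonempty, since $\overline{\{x\}}$ itself belongs to it. Because $X$ is Noetherian, its closed subsets satisfy the descending chain condition, and hence every nonempty family of closed subsets has a minimal element; let $C \in \mathcal{F}$ be such a minimal element. The one point requiring a little care here is the standard translation of the Noetherian hypothesis (descending chain condition) into the statement that $\mathcal{F}$ admits a minimal element, but this is routine.

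Next I would examine an arbitrary point $p \in C$. Its closure $\overline{\{p\}}$ is a nonempty closed subset of $X$ contained in $C \subseteq \overline{\{x\}}$, so $\overline{\{p\}} \in \mathcal{F}$; minimality of $C$ then forces $\overline{\{p\}} = C$. In particular $C = \overline{\{p\}}$ is irreducible (being the closure of a single point), and, crucially, \emph{every} point of $C$ is a generic point of $C$.

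The decisive step is now to invoke soberness, and this is really the crux of the argument. Since $C$ is an irreducible closed subset, it has a \emph{unique} generic point; but we have just shown that every point of $C$ is a generic point of $C$, so $C$ must consist of a single point $p$. This point satisfies $\overline{\{p\}} = C = \{p\}$, hence is closed in $X$, and it lies in $C \subseteq \overline{\{x\}}$, completing the proof. Thus the only genuine idea is the observation that a minimal nonempty closed set in a sober space collapses to a point; the remaining steps are direct applications of the definitions.
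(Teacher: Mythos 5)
Your proof is correct and is essentially the same argument as the paper's: the paper reaches the same configuration by iteratively choosing points $x_1, x_2, \ldots$ in successive closures and letting the descending chain $\overline{\{x\}} \supseteq \overline{\{x_1\}} \supseteq \cdots$ stabilize (the chain form of the Noetherian condition, equivalent to your minimal-element form), then invokes soberness on the resulting closed set in which every point is generic, exactly as you do with $C$. If anything, your packaging is slightly tidier, since minimality of $C$ immediately forces $\overline{\{p\}} = C$ for \emph{every} $p \in C$, whereas the chain construction exhibits this only along the chosen sequence of points.
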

\begin{proof}
If $\overline{\{x\}} = \{x\}$ then $x$ is a closed point. Otherwise there exists 
$x_1 \in \overline{\{x\}} \setminus \{x\}$, so $\overline{\{x\}} \supseteq \overline{\{x_1\}}$.
If $x_1$ is not a closed point, then continuing in this way gives a descending chain of closed
subsets $\overline{\{x\}} \supseteq \overline{\{x_1\}} \supseteq \overline{\{x_2\}} \supseteq \ldots$
which stabilizes to a closed subset $Y$ since $X$ is Noetherian. Then $Y$ is the closure of 
any of its points, i.e. every point of $Y$ is generic, so $Y$ is irreducible. Since $X$ is sober, 
$Y$ is a singleton consisting of a closed point.
\end{proof}

Since schemes are sober, this shows in particular that any scheme whose underlying 
topological space is Noetherian (e.g. any Noetherian scheme) has a closed point. 

In general, it is of basic importance to know that a scheme has closed points (or not). For instance,
recall that every affine scheme has a closed point (indeed, this is equivalent to the axiom of choice).

In this direction, one can give a simple topological characterization of the schemes with closed 
points. First, some notions of redundancy for an open cover:

\begin{mydef}
Let $X$ be a topological space, and $\U$ an open cover of $X$. 
\begin{enumerate}[label={\upshape(\alph*)}]
\item $\U$ is said to be
\textit{totally redundant} if $\displaystyle U \subseteq \bigcup_{V \in \, \U \setminus \{U\}} V$ 
for every $U \in \U$. 

\item Let $P$ be a property of topological spaces. $\U$ is said to be \textit{P-redundant}
if $\displaystyle U \subseteq \bigcup_{V \in \, \U \setminus \{U\}} V$ for every $U \in \U$ with 
property $P$.
\end{enumerate}
\end{mydef}

Recall from \cite{H} that being an affine scheme is a topological property.

\begin{theorem} \label{closedPtsChar}
The following are equivalent for a scheme $X$:

\begin{enumerate}

\item $X$ has a closed point

\item There exists an open cover of $X$ that is not affine-redundant

\item There exists an open affine cover of $X$ that is not totally redundant.

\end{enumerate}
\end{theorem}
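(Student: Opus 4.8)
The plan is to prove the cycle $(1) \Rightarrow (3) \Rightarrow (2) \Rightarrow (1)$. The implication $(3) \Rightarrow (2)$ requires no work: in an affine cover every member is affine, so affine-redundancy and total redundancy are literally the same condition, and a cover witnessing $(3)$ immediately witnesses $(2)$.

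For $(1) \Rightarrow (3)$, I would start from a closed point $x$, choose an affine open neighborhood $U_0$ of $x$, and cover the open subscheme $X \setminus \{x\}$ (open precisely because $x$ is closed) by affine opens $\{W_k\}$. Then $\{U_0\} \cup \{W_k\}$ is an affine open cover of $X$, and since $x \in U_0$ while $x \notin W_k$ for every $k$, the member $U_0$ is not contained in the union of the others. Hence this cover is not totally redundant. (The degenerate case $X = \{x\}$ is handled the same way, the union over the empty family of ``other'' sets being $\varnothing$.)

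The heart of the matter is $(2) \Rightarrow (1)$. Unpacking the hypothesis, there is an affine open $U = \Spec A$ in the cover $\U$ together with a point $p \in U$ lying in no other member of $\U$. The key observation I would establish is that the entire closure $\overline{\{p\}}$ is contained in $U$: if $q \in \overline{\{p\}}$, then any member $V \in \U$ containing $q$ is an open neighborhood of $q$, hence contains $p$; by the defining property of $p$ this forces $V = U$, so $q \in U$. Consequently $\overline{\{p\}}$, with its reduced induced structure, is a closed subscheme of $X$ that happens to sit inside the affine scheme $U$, and is therefore itself affine, namely $\Spec(A/\mathfrak{p})$ for the prime $\mathfrak{p}$ corresponding to $p$. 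Now I invoke the affine case quoted in the introduction: $\Spec(A/\mathfrak{p})$ has a closed point $m$, corresponding to a maximal ideal of $A$ containing $\mathfrak{p}$. Since $\{m\}$ is closed in $\overline{\{p\}}$ and $\overline{\{p\}}$ is closed in $X$, the point $m$ is closed in $X$.

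The only genuine idea is the containment $\overline{\{p\}} \subseteq U$; once it is in hand, the problem reduces cleanly to the already-known existence of closed points on affine schemes, which is exactly where the axiom of choice enters. I would then double-check the remaining edge cases (the empty scheme, where no cover is non-affine-redundant, so that $(2)$ correctly fails alongside $(1)$), but I expect no real difficulty there.
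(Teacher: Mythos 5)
Your proposal is correct and takes essentially the same route as the paper: the same cycle $(1)\Rightarrow(3)\Rightarrow(2)\Rightarrow(1)$, the same easy implications, and for $(2)\Rightarrow(1)$ the same reduction to the existence of closed points on affine schemes via a closed subset of $X$ sitting inside the affine member $U$. The only cosmetic difference is that you take the closure of a single witness point $p$ (obtaining $\Spec(A/\mathfrak{p})$) where the paper takes the full closed complement $Z = X \setminus \bigcup_{V \in \U \setminus \{U\}} V$; both are closed subschemes of $U$, hence affine, and produce a closed point of $X$ in the same way.
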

\begin{proof}
(3) $\implies$ (2): clear from definitions. 

(1) $\implies$ (3): Let $x \in X$ be a closed point. Then $X \setminus \{x\}$ is open, so let $\mathcal{V}$
be an open affine cover of $X \setminus \{x\}$ (recall that open affines form a basis for the topology of 
$X$). Choose an open affine neighborhood $U$ of $x$. Then
$\U := \mathcal{V} \cup \{U\}$ is an open affine cover of $X$ that is not totally redundant, since 
$\displaystyle x \in U \setminus \bigcup_{V \in \mathcal{V}} V$.

(2) $\Rightarrow$ (1): Let $\U$ be an open cover of $X$ that is not affine-redundant. For $U \in \U$ affine
with $\displaystyle U \not \subseteq \bigcup_{V \in \, \U \setminus \{U\}} V$, set $\displaystyle Z 
:= X \setminus \bigcup_{V \in \, \U \setminus \{U\}} V = U \setminus \bigcup_{V \in \, \U \setminus \{U\}} V$. 
Then $Z$ is a nonempty closed subset of $X$, so for any $x \in Z$, $\overline{\{x\}}^X = \overline{\{x\}}^Z$. 
Also $Z$ is a closed subset of 
$U$, hence $Z$ is an affine scheme (e.g. with the reduced induced subscheme structure). 
Thus $Z$ has a closed point, which is then also a closed point of $X$.
\end{proof}

Another way of stating \Cref{closedPtsChar} is that a scheme has no closed points iff every open 
cover is affine-redundant iff every open affine cover is totally redundant. Such schemes do exist, cf. 
\cite{O} for a quasi-affine example and \cite{S} for a general gluing construction.

The following is an immediate consequence of \Cref{closedPtsChar}:

\begin{cor} \label{qcClosed}
Let $X$ be a quasicompact scheme. Then $X$ has a closed point.
\end{cor}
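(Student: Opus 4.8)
The plan is to deduce this directly from \Cref{closedPtsChar} by exhibiting an open affine cover of $X$ that is not totally redundant, i.e.\ by verifying criterion (3). Since open affines form a basis for the topology of $X$, the collection of all open affine subsets is an open cover of $X$; as $X$ is quasicompact, it admits a \emph{finite} open affine subcover $\{U_1, \ldots, U_n\}$. (Here I tacitly assume $X \neq \emptyset$: the empty scheme is vacuously quasicompact but has no points, hence no closed point, so it must be excluded or the statement read as being about nonempty $X$.)

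Next I would pass to a minimal such cover. Because $\{U_1, \ldots, U_n\}$ is finite, I can discard members one at a time so long as the remaining sets still cover $X$; this process terminates and produces a subcollection $\U \subseteq \{U_1, \ldots, U_n\}$ that is still an open affine cover, is nonempty (as $X$ is nonempty), and is \emph{minimal}, in the sense that no proper subcollection of $\U$ covers $X$.

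The key observation is that a minimal cover cannot be totally redundant. Indeed, if $\U$ were totally redundant, then for any fixed $U \in \U$ we would have $\displaystyle U \subseteq \bigcup_{V \in \, \U \setminus \{U\}} V$, and hence $\displaystyle \bigcup_{V \in \, \U \setminus \{U\}} V = X$; but this exhibits $\U \setminus \{U\}$ as a proper subcover, contradicting minimality. Therefore $\U$ is an open affine cover of $X$ that is not totally redundant, so condition (3) of \Cref{closedPtsChar} is satisfied, and consequently $X$ has a closed point.

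I do not expect any serious obstacle here, since every step is elementary and quasicompactness does all the real work by reducing to the finite case. The only points demanding care are the degenerate case $X = \emptyset$, which must be set aside, and the clean identification of ``minimality of a finite cover'' with ``failure of total redundancy'', which is exactly what lets \Cref{closedPtsChar} apply.
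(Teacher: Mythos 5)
Your proof is correct and follows essentially the same route as the paper: the paper's one-line argument (``any finite cover has a subcover that is not totally redundant'') is exactly your minimal-subcover observation, applied to a finite open affine cover obtained from quasicompactness and fed into criterion (3) of \Cref{closedPtsChar}. Your explicit handling of the degenerate case $X = \emptyset$ is a reasonable extra precaution that the paper leaves to convention.
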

\begin{proof}
If $X$ is quasicompact, then $X$ has a finite cover by open affines, and any finite cover has 
a subcover that is not totally redundant.
\end{proof}

Since a closed subset of a quasicompact space is quasicompact, \Cref{qcClosed} implies
that every nonempty closed subscheme of a quasicompact scheme $X$ contains a 
closed point of $X$. This leads to a characterization of the quasicompact schemes with as 
few closed points as possible:

\begin{prop}
The following are equivalent for a scheme $X$:

\begin{enumerate}

\item $X$ is quasicompact and $X$ has a unique closed point 

\item $X = \Spec R$ for some local ring $R$

\end{enumerate}
\end{prop}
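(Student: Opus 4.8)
The plan is to dispatch the easy direction (2) $\implies$ (1) first and then concentrate on (1) $\implies$ (2). For (2) $\implies$ (1), I would simply recall that $\Spec R$ is always quasicompact, and that its closed points correspond exactly to the maximal ideals of $R$; since a local ring has a unique maximal ideal, $\Spec R$ has precisely one closed point.

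For the reverse implication, suppose $X$ is quasicompact with unique closed point $x$. The crucial first step is to observe that $x$ lies in \emph{every} nonempty closed subset of $X$. Indeed, any nonempty closed subset $Z \subseteq X$ is quasicompact, being closed in a quasicompact space, so by \Cref{qcClosed} applied to $Z$ (with, say, its reduced induced scheme structure) it contains a closed point of $X$; by uniqueness that point must be $x$. This is exactly the remark recorded just before the statement, and it is where the quasicompactness hypothesis gets used.

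Next I would choose an affine open neighborhood $U = \Spec A$ of $x$, which exists since the affine opens form a basis for the topology. The complement $X \setminus U$ is closed and does not contain $x$, so by the previous step it must be empty; hence $X = U$ is affine. Finally, the closed points of $X = \Spec A$ are precisely the maximal ideals of $A$, so having a unique closed point forces $A$ to have a unique maximal ideal, i.e. $A$ is local, which is (2).

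I do not anticipate a serious obstacle: the argument is quite short once the right fact is isolated. The only step that requires care is the reduction to ``$x$ belongs to every nonempty closed set,'' which is precisely what channels the quasicompactness hypothesis through \Cref{qcClosed}; everything after that — passing to an affine neighborhood, forcing it to exhaust $X$, and translating ``unique closed point'' into ``unique maximal ideal'' — is formal.
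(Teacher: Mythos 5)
Your proof is correct and takes essentially the same approach as the paper: both hinge on the remark, derived from \Cref{qcClosed}, that every nonempty closed subset of a quasicompact scheme contains a closed point of $X$, applied to the complement of an affine open neighborhood of the unique closed point $x$ to conclude $X$ is affine. The only difference is cosmetic—you spell out the final translation from ``unique closed point of $\Spec A$'' to ``$A$ is local,'' which the paper leaves implicit.
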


\begin{proof}
(2) $\implies$ (1): an affine scheme is quasicompact. 
(1) $\implies$ (2): let $x$ be the unique closed point, and 
$U$ an open affine neighborhood of $x$. Then $X \setminus U$ is closed,
so if $X \setminus U$ were nonempty then it would contain a closed point
of $X$, contradicting uniqueness of $x$. Thus $X = U$ is affine.
\end{proof}

In particular, spectra of local rings can be distinguished among all schemes
purely by their topology.

Finally, as an extension of \Cref{noetherianSoberSpace}, it is also true
that every locally Noetherian scheme has a closed point (cf. 
\cite[\href{http://stacks.math.columbia.edu/tag/02IL}{Lemma 02IL}]{St}). 
Thus \Cref{closedPtsChar} implies that any locally Noetherian 
scheme has an open cover by spectra of Noetherian rings that is not totally
redundant (although this is not so obvious from the definitions).

\vskip 4ex

\vskip 2ex


\begin{thebibliography}{2}

\bibitem{H}
Hochster, M. 
\newblock {\em Prime ideal structure in commutative rings}, 
\newblock Trans. Amer. Math. Soc. 142 (1969), 43--60.

\bibitem{O}
Ogus, A. 
\url{https://math.berkeley.edu/~ogus/Math\%20_256A--08/bigval.pdf}

\bibitem{S}
Schwede, K. 
\newblock {\em Gluing schemes and a scheme without closed points}, 
\newblock Recent progress in arithmetic and algebraic geometry (2005), 157--172.
% Contemp. Math., 386, Amer. Math. Soc., Providence, RI, .

\bibitem{St}
Stacks Project. 
\url{https://stacks.math.columbia.edu/}

\end{thebibliography}
\end{document}